\documentclass[11pt]{article}
\usepackage{amsmath, amssymb, amsthm}
\usepackage{enumitem}
\usepackage{hyperref}
\usepackage{cleveref}

\newtheorem{theorem}{Theorem}[section]
\newtheorem{lemma}[theorem]{Lemma}
\newtheorem{proposition}[theorem]{Proposition}
\newtheorem{corollary}[theorem]{Corollary}
\newtheorem{definition}[theorem]{Definition}
\newtheorem{remark}[theorem]{Remark}
\newtheorem{example}[theorem]{Example}

\title{A $B$-Restricted Clique Polynomial and Connections to Tanner's Inequality}
\author{Hossein Teimoori Faal}
\date{Feb 21, 2026}

\begin{document}

\maketitle


\begin{abstract}
	Let $G$ be a finite simple graph and $B \subseteq V(G)$.  
	We study the \emph{$B$-restricted clique polynomial} $C_B(G;x)$, including its weighted version allowing vertex multiplicities, as a versatile tool to capture structural properties of vertex subsets.
	
	First, we develop a complete deletion theory for $C_B(G;x)$, including vertex and edge recurrences that generalize classical clique polynomial results.  
	These recurrences yield monotonicity principles for the largest negative root $\zeta_G(B)$: it is monotone under induced subgraphs and reverse-monotone under spanning subgraphs.  
	Consequently, we derive explicit bounds on $B$-independence numbers, chromatic numbers, $B$-girth, and Hamiltonicity constraints, showing that $\zeta_G(B)$ serves as a unifying local invariant.
	
	Next, we connect $B$-clique polynomials to spectral graph theory.  
	For $(n,d,\lambda)$-graphs, spectral techniques, including the Expander Mixing Lemma and Tanner's inequality, provide uniform bounds on $B$-restricted clique coefficients, demonstrating that clique growth within $B$ is naturally controlled by the spectral gap.
	
	Finally, we show that weighted $B$-clique polynomials encode \emph{homomorphism constraints}.  
	Specifically, if $f: G \to H$ is a surjective homomorphism mapping $B_G$ onto $B_H$, then $\zeta_G(B_G) \ge \zeta_H(B_H)$, yielding a local \emph{no-homomorphism criterion} based on $B$-roots.
	
	Overall, $C_B(G;x)$ provides a unified framework capturing combinatorial, spectral, and homomorphic information in vertex-restricted analysis, highlighting its power for both global and local structural insights.
\end{abstract}


\section{Introduction}

Clique polynomials encode fundamental combinatorial information about graphs.  
For a finite simple graph $G=(V,E)$, the \emph{clique polynomial} is
\[
C(G;x) = \sum_{i\ge 0} c_i x^i,
\]
where $c_i$ counts the $i$-vertex cliques in $G$.  
These polynomials have been extensively studied, with connections to dependence polynomials \cite{fisher1990dependence}, root location \cite{hajiabolhassan1998clique}, and extremal graph theory \cite{hoede1994clique}.  

\subsection*{$B$-Restricted Clique Polynomials}

Given a subset $B\subseteq V(G)$, we define the \emph{$B$-restricted clique polynomial}
\[
C_B(G;x) := \sum_{i=0}^{\omega(G[B])} c_i(B) x^i,
\quad c_i(B) = |\{K\subseteq B : |K|=i \text{ and } K \text{ is a clique in } G\}|,
\]
which explicitly emphasizes the role of $B$ within the ambient graph $G$.  
While formally equivalent to $C(G[B];x)$, this viewpoint allows $G$ to remain fixed and $B$ to vary, revealing new algebraic, extremal, and structural phenomena.  

We also consider \emph{weighted $B$-clique polynomials}, where each $v_i \in B$ carries a positive integer weight $w_i$.  
Weighted polynomials count cliques according to vertex multiplicities, corresponding combinatorially to \emph{$B$-blow-up graphs}.  
This extension naturally connects to graph homomorphisms and provides refined tools for combinatorial analysis.

\subsection*{Homomorphism Perspective}

Let $f: G \to H$ be a surjective graph homomorphism.  
For each $x \in V(H)$, define $A_x = f^{-1}(x)\subseteq V(G)$.  
These sets are independent and partition $V(G)$.  
In the $B$-restricted setting, for $B_G \subseteq V(G)$, the restriction $f(B_G) \subseteq V(H)$ induces a \emph{$B$-partition} of $B_G$ into independent subsets.  
Weighted $B$-clique polynomials then provide a combinatorial tool to detect local homomorphism obstructions, giving a \emph{no-homomorphism criterion}.

\subsection*{Our Contributions}

This paper develops a systematic study of $B$-restricted clique polynomials and their largest negative root
\(\zeta_G(B)\), highlighting combinatorial, spectral, and homomorphic applications.

\begin{enumerate}
	\item \textbf{Structural Recurrences.} Vertex and edge deletion formulas for $C_B(G;x)$ mirror classical clique polynomial recurrences \cite{hajiabolhassan1998clique}, providing an algebraic backbone for analysis.  
	
	\item \textbf{Root Monotonicity and Extremal Applications.}  
	The largest negative root $\zeta_G(B)$ is monotone under induced subgraphs and reverse-monotone under spanning subgraphs.  
	Consequently, we derive tight bounds on $B$-independence numbers, $B$-chromatic numbers, $B$-girth, and Hamiltonicity within $B$, showing that $\zeta_G(B)$ serves as a precise local invariant controlling combinatorial structures \cite{fisher1990dependence,hoede1994clique}.  
	
	\item \textbf{Spectral Constraints.}  
	For $(n,d,\lambda)$-graphs, spectral techniques and Tanner's isoperimetric inequality \cite{tanner1984explicit,spiesser2010spectral} yield bounds on $B$-restricted clique coefficients.  
	This links $\zeta_G(B)$ to spectral pseudorandomness and expansion, showing that clique growth inside $B$ is governed by the spectral gap.  
	
	\item \textbf{Weighted and Homomorphism Applications.}  
	Weighted $B$-clique polynomials encode combinatorial homomorphism constraints: if $f: G \to H$ is surjective and respects $B$, then
	\(\zeta_G(B) \ge \zeta_H(B)\).  
	This establishes a local no-homomorphism criterion and demonstrates that $B$-clique polynomials unify combinatorial, spectral, and homomorphic information.
\end{enumerate}

\subsection*{Organization of the Paper}

The remainder of the paper is organized as follows.  
In Section~2, we introduce notation, review classical clique polynomials, and define the $B$-restricted and weighted clique polynomials.  
Section~3 develops structural recurrences and establishes monotonicity properties of the largest negative root $\zeta_G(B)$.  
Section~4 presents extremal applications, deriving $B$-independence, $B$-chromatic, $B$-girth, and Hamiltonicity bounds controlled by $\zeta_G(B)$.  
In Section~5, we connect $B$-clique polynomials to spectral graph theory, using the Expander Mixing Lemma and Tanner's isoperimetric inequality to bound clique growth within $B$.  
Section~6 explores weighted $B$-clique polynomials and their application to graph homomorphisms, establishing a local no-homomorphism criterion.  
Finally, Section~7 concludes with a discussion of open problems and directions for future research.


\subsection{Local Restriction Notation}

For $v\in V$ and $B\subseteq V$, define
\[
N_B(v):=N(v)\cap B.
\]
More generally, for a clique $K\subseteq B$, define
\[
N_B(K):=\bigcap_{v\in K} N_B(v).
\]
Thus $N_B(K)$ consists precisely of those vertices in $B$
that can extend $K$ to a larger clique.

This notation will allow us to formulate deletion recurrences
in a form directly parallel to the classical theory.

\subsection{Roots and Extremal Parameters}

Since $C_B(G;0)=1$ and all coefficients are nonnegative,
every real root of $C_B(G;x)$ is negative.

\begin{definition}
	Let
	\[
	Z_B(G)
	=
	\{x<0 : C_B(G;x)=0\}.
	\]
	If $Z_B(G)\neq\emptyset$, define
	\[
	\zeta_G(B)=\max Z_B(G),
	\]
	the largest negative root of $C_B(G;x)$.
	If $Z_B(G)=\emptyset$, set $\zeta_G(B)=-\infty$.
\end{definition}

We also define the following extremal parameters restricted to $B$:

\begin{itemize}
	\item The \emph{$B$-independence number}
	\[
	\alpha_B(G)
	=
	\max\{|I| : I\subseteq B \text{ independent in } G\}.
	\]
	
	\item The clique number of $G[B]$, denoted $\omega(G[B])$.
\end{itemize}

As in the classical theory, the parameter $\zeta_G(B)$ will serve as
a bridge between algebraic information (roots of the polynomial)
and combinatorial structure (independence and clique bounds).

A fundamental tool in this context is the \emph{Expander Mixing Lemma} \cite{spiesser2010spectral},
which asserts that for any subsets $X,Y\subseteq V(G)$,
\[
\left|
e(X,Y)-\frac{d|X||Y|}{n}
\right|
\le
\lambda\sqrt{|X||Y|},
\]
where $e(X,Y)$ denotes the number of edges with one endpoint in $X$ and the other in $Y$.

\section{Recurrence Relations and Root Monotonicity}

In this section we develop the structural recurrences for the
$B$-restricted clique polynomial and derive the fundamental
monotonicity properties of its largest negative root.
These results parallel the classical theory for ordinary
clique polynomials and form the algebraic backbone of the paper.

\subsection{Vertex Deletion}

\begin{lemma}[Vertex Deletion Recurrence]\label{lem:vertex-del}
	Let $v\in V(G)$ and $B\subseteq V(G)$. Then
	
	\begin{enumerate}
		\item[(i)] If $v\in B$, then
		\[
		C_B(G;x)
		=
		C_{B\setminus\{v\}}(G-v;x)
		+
		x\,C_{B\cap N(v)}(G[N(v)];x).
		\]
		
		\item[(ii)] If $v\notin B$, then
		\[
		C_B(G;x)=C_B(G-v;x).
		\]
	\end{enumerate}
\end{lemma}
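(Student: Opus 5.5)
This is a bijective counting argument carried out coefficient by coefficient. By definition $c_i(B)$ counts the $i$-subsets $K\subseteq B$ that are cliques in $G$. Whether a set is a clique depends only on the edges among its own vertices, so for any $W\subseteq V(G)$ with $K\subseteq W$, $K$ is a clique in $G$ if and only if it is a clique in $G[W]$. All polynomials in the statement therefore count cliques of the same graph restricted to different ground sets, and we can compare their coefficients of $x^i$ directly.

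For (ii), suppose $v\notin B$. Every clique $K\subseteq B$ avoids $v$, so $K\subseteq V(G-v)$. By the remark above, $K$ is a clique in $G$ exactly when it is a clique in $G-v$. Hence $c_i(B)$ computed in $G$ equals $c_i(B)$ computed in $G-v$ for every $i$. Note that $B\subseteq V(G-v)$, so the right-hand side is well defined. This gives $C_B(G;x)=C_B(G-v;x)$.

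For (i), suppose $v\in B$ and partition the cliques $K\subseteq B$ by whether $v\in K$.

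\emph{Cliques avoiding $v$.} These are exactly the cliques of $G-v$ contained in $B\setminus\{v\}$. They contribute $C_{B\setminus\{v\}}(G-v;x)$, and the empty clique contributes the constant term $1$ here.

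\emph{Cliques containing $v$.} Consider the map $K\mapsto K\setminus\{v\}$. Every vertex of a clique $K\ni v$ other than $v$ is adjacent to $v$, so $K\setminus\{v\}$ is a clique of size $|K|-1$ contained in $B\cap N(v)$. It is a clique of $G[N(v)]$ by the induced-subgraph remark. Conversely, for any clique $K'\subseteq B\cap N(v)$ of $G[N(v)]$, the set $K'\cup\{v\}$ is a clique of $G$ contained in $B$, since $v\in B$. The two maps are mutually inverse and shift size by one, including $K=\{v\}\leftrightarrow K'=\emptyset$. So this class contributes $x\,C_{B\cap N(v)}(G[N(v)];x)$.

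Adding the two contributions gives (i). The only step needing care is the bookkeeping in the second class: the ambient graph $G[N(v)]$ has vertex set $N(v)$, so $B\cap N(v)$ is a legitimate subset of it, and the upper summation limits $\omega(\cdot)$ match because the bijection shifts sizes by exactly one. I expect no genuine obstacle beyond stating these conventions clearly.
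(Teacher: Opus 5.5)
Your proof is correct and follows the paper's argument: split the cliques in $B$ by whether they contain $v$, identify those avoiding $v$ with the cliques of $G-v$ inside $B\setminus\{v\}$, biject those containing $v$ via $K\mapsto K\setminus\{v\}$ with the cliques of $G[N(v)]$ inside $B\cap N(v)$ (giving the factor $x$), and note that for $v\notin B$ no counted clique involves $v$. Compared with the paper, you only spell out more carefully the inverse map, the size shift (including $\{v\}\leftrightarrow\emptyset$), and the fact that being a clique does not depend on the ambient induced subgraph.
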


\begin{proof}
	Every clique $K\subseteq B$ either contains $v$ or does not.
	
	If $v\notin K$, then $K$ is a clique of $G-v$
	contained in $B\setminus\{v\}$.
	These are counted by $C_{B\setminus\{v\}}(G-v;x)$.
	
	If $v\in K$, then $K=\{v\}\cup K'$,
	where $K'$ is a clique in $G[N(v)]$
	contained in $B\cap N(v)$.
	Each such $K'$ contributes one higher degree term,
	giving the factor $x$.
	
	If $v\notin B$, no clique in $B$ can contain $v$,
	so deletion of $v$ does not affect $C_B(G;x)$.
\end{proof}

\begin{example}
	Let $G = K_3$ (triangle) with vertices $\{a,b,c\}$, and let $B = \{a,b\}$. Then:
	\begin{itemize}
		\item $G[B]$ is an edge $ab$.
		\item $c_0(B) = 1$, $c_1(B) = 2$ (vertices $a$ and $b$), $c_2(B) = 1$ (edge $ab$).
		\item Thus $C_B(G;x) = 1 + 2x + x^2 = (1+x)^2$.
	\end{itemize}
	Check Lemma \ref{lem:vertex-del} with $v = a \in B$:
	\begin{itemize}
		\item $C_{B\setminus\{a\}}(G\setminus a;x)$: $B\setminus\{a\} = \{b\}$, $G\setminus a$ is an isolated vertex $b$, so polynomial is $1 + x$.
		\item $C_{B\cap N(a)}(G[N(a)];x)$: $N(a) = \{b,c\}$, $B\cap N(a) = \{b\}$, $G[N(a)]$ is edge $bc$, restricted to $\{b\}$ gives polynomial $1 + x$.
		\item $x \cdot (1+x) = x + x^2$.
		\item Sum: $(1+x) + (x + x^2) = 1 + 2x + x^2$, matching $C_B(G;x)$.
	\end{itemize}
\end{example}


\subsection{Edge Deletion}

\begin{lemma}[Edge Deletion Recurrence]\label{lem:edge-del}
	Let $uv\in E(G)$ and $B\subseteq V(G)$. Then
	\[
	C_B(G;x)
	=
	C_B(G-uv;x)
	+
	x^2\,C_{B\cap N(u)\cap N(v)}(G[N(u)\cap N(v)];x).
	\]
\end{lemma}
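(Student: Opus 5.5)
The plan is a direct bijective count, in the same style as the proof of Lemma~\ref{lem:vertex-del}. I would split the cliques $K\subseteq B$ of $G$ according to whether they contain both endpoints $u$ and $v$. The deleted graph $G-uv$ has the same vertex set and differs from $G$ only in the pair $\{u,v\}$. So a set $K$ with $\{u,v\}\not\subseteq K$ is a clique in $G$ if and only if it is a clique in $G-uv$. Moreover, $G-uv$ has no clique containing both $u$ and $v$. Hence the cliques $K\subseteq B$ that avoid $u$ or avoid $v$ are counted, degree by degree, exactly by $C_B(G-uv;x)$.

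Next I would treat the cliques $K\subseteq B$ of $G$ with $\{u,v\}\subseteq K$. Write $K=\{u,v\}\cup K'$ with $K'=K\setminus\{u,v\}$. Every vertex of $K'$ is adjacent to both $u$ and $v$, so $K'\subseteq N(u)\cap N(v)\cap B$. Also $K'$ is a clique in $G$, hence in the induced subgraph $G[N(u)\cap N(v)]$. Conversely, given any clique $K'$ of $G[N(u)\cap N(v)]$ contained in $B\cap N(u)\cap N(v)$, the set $\{u,v\}\cup K'$ is a clique of $G$, because $uv\in E(G)$ and every vertex of $K'$ sees both $u$ and $v$. Since $u,v\notin N(u)\cap N(v)$ in a simple graph, the map $K'\mapsto\{u,v\}\cup K'$ is a bijection that raises size by exactly $2$. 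This gives the term $x^2\,C_{B\cap N(u)\cap N(v)}(G[N(u)\cap N(v)];x)$, with the empty $K'$ giving the edge $uv$ itself. Adding the two contributions coefficientwise yields the recurrence.

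The main obstacle is a hypothesis hidden in the second step. The converse direction needs $\{u,v\}\cup K'\subseteq B$, which holds only when $u,v\in B$. If, say, $u\notin B$, then no clique inside $B$ contains $u$, so $C_B(G;x)=C_B(G-uv;x)$. Yet the stated correction term is still nonzero, since its constant term is $1$. I would therefore prove the formula under the assumption $u,v\in B$, which is the case the statement implicitly intends. I would record separately the trivial case $\{u,v\}\not\subseteq B$, where $C_B(G;x)=C_B(G-uv;x)$ holds by the first paragraph alone, in parallel with part~(ii) of Lemma~\ref{lem:vertex-del}. A small check on $G=K_3$ with $B=\{a,b\}$ and $uv=ab$ would confirm the bookkeeping. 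Indeed $C_B(G-ab;x)=1+2x$, the correction term is $x^2\cdot C_{\emptyset}(G[\{c\}];x)=x^2$, and the sum is $(1+x)^2$.
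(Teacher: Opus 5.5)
Your proof is correct and uses the same two-case split as the paper's proof. Cliques $K\subseteq B$ not containing both $u$ and $v$ are exactly the $B$-cliques of $G-uv$, and those containing both are $\{u,v\}\cup K'$ with $K'$ a clique of $G[N(u)\cap N(v)]$ inside $B\cap N(u)\cap N(v)$.

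Your caveat is also right, and it is a real defect of the statement that the paper's proof passes over, since its converse direction silently needs $u,v\in B$. For $\{u,v\}\not\subseteq B$ one has $C_B(G;x)=C_B(G-uv;x)$ while the correction term is nonzero. For example, $G=K_2$, $B=\{v\}$ gives $1+x$ on the left and $1+x+x^2$ on the right. So the lemma should assume $u,v\in B$, and the other case is the trivial identity you record, which is also all that the spanning-subgraph monotonicity argument needs there.
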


\begin{proof}
	A clique $K\subseteq B$ either does not contain both $u$ and $v$,
	or it does.
	
	If it does not, then it remains a clique in $G-uv$.
	If it contains both, then
	$K=\{u,v\}\cup K'$,
	where $K'$ is a clique in $G[N(u)\cap N(v)]$
	contained in $B\cap N(u)\cap N(v)$.
	Each such extension increases the degree by $2$,
	yielding the stated identity.
\end{proof}

\subsection{Existence of a Real Root}

We now show that $C_B(G;x)$ always has a real root in $[-1,0)$
whenever $B$ is nonempty.

\begin{theorem}[Existence of a Negative Root]\label{thm:existence}
	Let $B\subseteq V(G)$ be nonempty.
	Then $C_B(G;x)$ has at least one real root in the interval $[-1,0)$.
\end{theorem}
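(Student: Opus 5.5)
The plan is to reduce to the ordinary clique polynomial and then use a generating-function argument with Pringsheim's theorem. By definition, $C_B(G;x)=C(G[B];x)$, so it suffices to prove the following: for any nonempty graph $H=G[B]$ on vertex set $B$, the polynomial $C(H;x)$ has a real root in $[-1,0)$. The recurrences of Lemmas~\ref{lem:vertex-del} and \ref{lem:edge-del} do not seem to control root location directly. Instead I would show that the function
\[
F(t):=\frac{1}{C(H;-t)}=\frac{1}{\sum_{K}(-1)^{|K|}t^{|K|}},
\]
where the sum is over cliques $K$ of $H$ including $\emptyset$, is a power series in $t$ with \emph{nonnegative} coefficients, at least one for every power of $t$.

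\textbf{Step 1 (combinatorial expansion).} Regard $B$ as an alphabet in which two distinct letters commute exactly when they are adjacent in $H$. This is the partially commutative (trace) monoid $M(B,H)$. The Cartier--Foata inversion formula states that
\[
\sum_{K}(-1)^{|K|}t^{|K|}\cdot\sum_{n\ge 0} a_n t^n = 1,
\]
where the first sum runs over sets of pairwise commuting letters and $a_n$ is the number of traces of length $n$. The sets of pairwise commuting letters are exactly the cliques of $H$, so the first factor is $C(H;-t)$. Hence $F(t)=\sum_n a_n t^n$ with $a_n\ge 0$.

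I would include a short proof of the identity rather than just cite it, since this is the main technical step. The sign-reversing involution goes as follows. Consider a pair (clique $K$, trace $w$) and look at the set of letters that can appear first in $Kw$. Toggle the smallest such letter between $K$ and the front of $w$. This cancels every pair except $(\emptyset,\text{empty trace})$.

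\textbf{Step 2 (growth bound).} Since $B\ne\emptyset$, pick $b\in B$. The words $b^n$ give distinct traces, so $a_n\ge 1$ for all $n$. Therefore the radius of convergence $\rho$ of $F$ satisfies $\rho\le 1$. Also $\rho>0$, because $F$ is rational with $C(H;0)=1\neq 0$.

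\textbf{Step 3 (Pringsheim).} A power series with nonnegative coefficients has a singularity at $t=\rho$. Since $F=1/C(H;-t)$ has numerator $1$, there is no cancellation, so its only singularities are zeros of $C(H;-t)$. Hence $C(H;-\rho)=0$ with $0<\rho\le 1$. This gives a root $-\rho\in[-1,0)$ of $C_B(G;x)$.

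As a side remark, this root is in fact the one of smallest modulus, which is consistent with $\zeta_G(B)$ being well defined and finite whenever $B\neq\emptyset$.

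I expect the main obstacle to be Step 1, specifically making the involution well defined on traces rather than on words. The fallback is to cite Cartier--Foata (equivalently, Fisher's dependence-polynomial identity \cite{fisher1990dependence}, or the Goldwurm--Santini result that the clique polynomial has a real root of smallest modulus) and then run Steps 2--3 as written.
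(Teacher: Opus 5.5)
Your proof is correct, but it takes a different route from the paper. The paper picks $u\in B$, notes that the single-vertex polynomial $1+x$ has root $-1$, and invokes root monotonicity (Theorem~\ref{thm:induced-mon}, proved from the vertex-deletion recurrence by induction and the Intermediate Value Theorem) to get $-1\le\zeta_G(B)<0$. You instead identify $1/C(G[B];-t)$ with the trace-counting series of the partially commutative monoid, use $a_n\ge 1$ to force $\rho\le 1$, and let Pringsheim put a zero of $C_B(G;x)$ at $x=-\rho$. Your involution is sound on traces: if the least first letter $a$ of $Kw$ is not in $K$, then $a$ commutes with every letter of $K$ and is a first letter of $w$. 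The toggle also leaves the trace $Kw$ unchanged, so it is an involution.

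What your route buys is a stronger and self-contained result. The root $-\rho$ has minimum modulus among all roots, so it equals $\zeta_G(B)$. Moreover, traces over $B'\subseteq B$ embed in traces over $B$, so $a_n(B')\le a_n(B)$ gives $\zeta_G(B')\le\zeta_G(B)$ directly. This monotonicity in the restriction set is exactly what the paper actually needs. Theorem~\ref{thm:induced-mon} as stated assumes $B\subseteq V(H)$, which fails for $H=G[\{u\}]$ when $|B|\ge 2$.

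What the paper's route buys is elementarity: it uses real variables and the IVT only, and stays inside the recurrence framework. So your remark that the recurrences ``do not seem to control root location directly'' is too pessimistic. For $v\in B$, write $C_B(G;x)=C_{B\setminus\{v\}}(G;x)+x\,C_{B\cap N(v)}(G;x)$ and evaluate at $\zeta=\zeta_G(B\setminus\{v\})$. The inductive hypothesis $\zeta_G(B\cap N(v))\le\zeta$ (induction on $|B|$) makes the second factor nonnegative, so $C_B(G;\zeta)\le 0$. The IVT then gives a root in $[\zeta,0)$, and $\zeta\ge -1$ by the same induction.
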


\begin{proof}
	Choose any vertex $u\in B$ and consider the induced subgraph
	$H=G[\{u\}]$.
	Then
	\[
	C_B(H;x)=1+x,
	\]
	whose unique real root is $-1$.
	
	We will prove below (Theorem~\ref{thm:induced-mon})
	that the largest negative root is monotone under induced subgraphs.
	Applying that result gives
	\[
	-1=\zeta_H(B)\le \zeta_G(B).
	\]
	
	Since $C_B(G;0)=1>0$ and all coefficients are nonnegative,
	any real root must lie in $(-\infty,0)$.
	Thus $\zeta_G(B)\in[-1,0)$,
	and in particular a real root exists.
\end{proof}

\subsection{Monotonicity Under Induced Subgraphs}

\begin{theorem}[Induced Subgraph Monotonicity]
	\label{thm:induced-mon}
	Let $H$ be an induced subgraph of $G$
	and let $B\subseteq V(H)$.
	Then
	\[
	\zeta_H(B)\le \zeta_G(B).
	\]
\end{theorem}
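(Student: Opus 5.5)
The proof should be essentially definitional. It needs no recurrence argument, because the $B$-restricted clique polynomial depends only on the subgraph induced on $B$. Concretely, the plan is to show that $C_B(H;x)$ and $C_B(G;x)$ are the \emph{same polynomial}. Their root sets $Z_B(H)$ and $Z_B(G)$ then coincide, so $\zeta_H(B)=\zeta_G(B)$, which in particular gives the stated inequality.

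\textbf{Step 1: the two induced graphs on $B$ agree.} Since $H$ is an induced subgraph of $G$ and $B\subseteq V(H)\subseteq V(G)$, we have $H[B]=G[B]$. For $u,v\in B$, $uv\in E(H)$ iff $uv\in E(G)$, because $H$ contains every edge of $G$ between its own vertices.

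\textbf{Step 2: the clique counts agree.} A set $K\subseteq B$ is a clique in $H$ iff every pair in $K$ is adjacent in $H$. By Step 1 this holds iff every pair is adjacent in $G$, i.e.\ iff $K$ is a clique in $G$. Hence $c_i(B)$ computed in $H$ equals $c_i(B)$ computed in $G$ for every $i$, and the upper summation limits coincide since $\omega(H[B])=\omega(G[B])$. Therefore $C_B(H;x)=C_B(G;x)$.

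\textbf{Step 3: conclude.} The polynomials are equal, so $Z_B(H)=Z_B(G)$. Their maxima therefore agree, and in the empty case both equal $-\infty$ by the convention in the Definition. Thus $\zeta_H(B)=\zeta_G(B)$, and a fortiori $\zeta_H(B)\le\zeta_G(B)$.

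\textbf{Main obstacle.} There is no real obstacle in this argument; the only point needing care is the hypothesis $B\subseteq V(H)$. Without it, $C_B(H;x)$ would have to be read as $C_{B\cap V(H)}(H;x)$, which is how it is used in the proof of Theorem~\ref{thm:existence}.

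That more general comparison can be handled by iterating Lemma~\ref{lem:vertex-del} to delete the vertices of $G$ outside $H$ one at a time. Deleting $v\notin B$ changes nothing, by part (ii). Deleting $v\in B$ adds $x\,C_{B\cap N(v)}(G[N(v)];x)$, by part (i). Comparing the largest negative roots of $P$ and $P+xQ$ would then require an interlacing-type argument of the kind used for classical clique polynomials \cite{hajiabolhassan1998clique}, which is the genuinely hard step. For the statement as given, with $B\subseteq V(H)$, Steps~1--3 suffice.
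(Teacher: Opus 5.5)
Your proof is correct, and it actually gives equality. Since $B\subseteq V(H)$ forces $H[B]=G[B]$, you get $C_B(H;x)=C_B(G;x)$ and hence $\zeta_H(B)=\zeta_G(B)$.

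The paper instead reduces to $H=G-v$ and applies Lemma~\ref{lem:vertex-del}, treating the cases $v\notin B$ and $v\in B$ separately. Under the stated hypothesis only the first case can occur, and iterating it is exactly your polynomial identity. So for the theorem as written the two arguments agree, and yours is the more transparent presentation.

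The paper's second case (recurrence plus the Intermediate Value Theorem) is really aimed at the general comparison $\zeta_{G[S]}(B\cap S)\le\zeta_G(B)$. That is what the proof of Theorem~\ref{thm:existence} uses, with $H=G[\{u\}]$ and $B\not\subseteq V(H)$; you were right to flag this.

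That general case is easier than your last paragraph suggests, and it needs no interlacing. Take $\zeta=\zeta_{G-v}(B\setminus\{v\})$; if $\zeta=-\infty$ there is nothing to prove. Let $Q(x)=C_{B\cap N(v)}(G[N(v)];x)$.
\begin{enumerate}
\item Apply induction on $|V(G)|$ to the induced subgraph $G[N(v)]$ of $G-v$, noting $B\cap N(v)=(B\setminus\{v\})\cap N(v)$. This shows that the largest negative root of $Q$ is at most $\zeta$.
\item Since $Q(0)=1$, $Q$ has no root in $(\zeta,0]$, so $Q(\zeta)\ge 0$ by continuity.
\item Therefore $C_B(G;\zeta)=\zeta\,Q(\zeta)\le 0<C_B(G;0)$, and the Intermediate Value Theorem gives a root in $[\zeta,0)$.
\end{enumerate}
The paper's text compares the root of $Q$ with $\zeta_G(B)$ rather than with $\zeta$, which is a slip, but this is the intended argument.
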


\begin{proof}
	It suffices to consider $H=G-v$.
	Let $\zeta=\zeta_{G-v}(B)$.
	
	If $v\notin B$, then
	$C_B(G;x)=C_B(G-v;x)$
	and the roots coincide.
	
	If $v\in B$, then by Lemma~\ref{lem:vertex-del},
	\[
	C_B(G;x)
	=
	C_{B\setminus\{v\}}(G-v;x)
	+
	x\,C_{B\cap N(v)}(G[N(v)];x).
	\]
	
	Substitute $x=\zeta$.
	Since $\zeta$ is a root of $C_B(G-v;x)$,
	the first term vanishes, giving
	\[
	C_B(G;\zeta)
	=
	\zeta\,
	C_{B\cap N(v)}(G[N(v)];\zeta).
	\]
	
	By induction on $|V(G)|$,
	the largest negative root of the induced graph
	$G[N(v)]$ is at most $\zeta_G(B)$.
	Hence the second factor is nonnegative.
	Since $\zeta<0$,
	we obtain $C_B(G;\zeta)\le0$.
	
	Because $C_B(G;0)=1>0$,
	the Intermediate Value Theorem
	implies that $C_B(G;x)$ has a root in $[\zeta,0)$.
	Thus $\zeta_G(B)\ge\zeta$.
\end{proof}

\subsection{Monotonicity Under Spanning Subgraphs}

\begin{theorem}[Spanning Subgraph Monotonicity]
	\label{thm:spanning-mon}
	Let $H$ be a spanning subgraph of $G$.
	Then for any $B\subseteq V(G)$,
	\[
	\zeta_G(B)\le \zeta_H(B).
	\]
\end{theorem}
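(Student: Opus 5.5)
We prove the statement by repeatedly applying the Edge Deletion Recurrence (Lemma~\ref{lem:edge-del}). This mirrors the proof of Theorem~\ref{thm:induced-mon}.

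\textbf{Reduction.} A spanning subgraph $H$ of $G$ is obtained from $G$ by deleting finitely many edges one at a time. It therefore suffices to treat $H=G-uv$ for a single edge $uv\in E(G)$. If $u\notin B$ or $v\notin B$, no clique inside $B$ contains both endpoints, so the correction term in Lemma~\ref{lem:edge-del} is empty. Then $C_B(G;x)=C_B(G-uv;x)$ and the roots coincide. So assume $u,v\in B$.

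\textbf{Main step.} Set $\zeta=\zeta_G(B)$; by Theorem~\ref{thm:existence} it is a genuine root in $[-1,0)$. Lemma~\ref{lem:edge-del} gives
\[
C_B(G-uv;x)=C_B(G;x)-x^2\,C_{B'}(G';x),
\]
where $G'=G[N(u)\cap N(v)]$ and $B'=B\cap N(u)\cap N(v)$. Evaluating at $x=\zeta$ gives
\[
C_B(G-uv;\zeta)=-\zeta^2\,C_{B'}(G';\zeta).
\]
We want this to be $\le 0$, i.e.\ $C_{B'}(G';\zeta)\ge 0$. Since $C_B(G-uv;0)=1>0$, the Intermediate Value Theorem would then give a root of $C_B(G-uv;x)$ in $[\zeta,0)$, so $\zeta_{G-uv}(B)\ge\zeta$, as required.

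\textbf{Sign of the correction term.} The graph $G'$ is obtained from $G$ by deleting $u$, $v$ and all vertices outside $N(u)\cap N(v)$, so $G'$ is an induced subgraph of $G$. Let $\eta=\zeta_{G'}(B')$. I would aim to show $\eta<\zeta$, which would make $C_{B'}(G';x)$ positive on $(\eta,0)\ni\zeta$ (a polynomial with constant term $1$ and no roots in $(\eta,0)$ is positive there).
\begin{itemize}
\item Theorem~\ref{thm:induced-mon} gives only $\eta\le\zeta_G(B')$.
\item Applying that theorem again, now to the induced subgraph $G[B'\cup\{u\}]$ and using $\{u\}\cup B'\subseteq B$ together with the identity $C_{B'\cup\{u\}}=C_{B'}+xC_{B'}$ (since $u$ is adjacent to all of $B'$), one gets $C_{B'\cup\{u\}}(x)=(1+x)C_{B'}(x)$.
\end{itemize}
Hence every root of $C_{B'}$ is a root of a restricted clique polynomial on a subset of $B$ inside an induced subgraph of $G$.

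\textbf{Main obstacle.} Comparing the largest root of $C_{B}$ on $G$ with that of $C_{B''}$ for a \emph{smaller} set $B''\subseteq B$ requires $B$-monotonicity in the set, which is not literally what Theorem~\ref{thm:induced-mon} states. However, $C_{B''}(G;x)=C_{B''}(G[B''];x)$ and $C_B(G;x)=C_B(G[B];x)$, so it is exactly induced-subgraph monotonicity for $G[B'']\subseteq G[B]$. In the form proved there it gives only a non-strict inequality $\eta\le\zeta$.

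Non-strictness is harmless. If $\eta=\zeta$, then $C_{B'}(G';\zeta)=0$, so $C_B(G-uv;\zeta)=0$ and again $\zeta_{G-uv}(B)\ge\zeta$. If $\eta<\zeta$, then $C_{B'}(G';\zeta)>0$. Either way we get $C_{B'}(G';\zeta)\ge0$ as needed. If one is worried about the inductive scaffolding in Theorem~\ref{thm:induced-mon}, I would run a simultaneous induction on $|V(G)|$ proving both monotonicity statements together, exactly as that proof invokes induction for $G[N(v)]$.
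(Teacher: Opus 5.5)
Your proof is correct and is essentially the paper's argument: evaluate the edge-deletion recurrence at $x=\zeta_G(B)$, obtain $C_B(G-uv;\zeta)=-\zeta^2 C_{B'}(G';\zeta)\le 0$ from induced-subgraph monotonicity, and conclude with the Intermediate Value Theorem. You add two repairs the paper glosses over: you treat $u\notin B$ or $v\notin B$ separately, where the lemma's correction term should vanish, and you note that the real input is monotonicity under shrinking the subset, $\zeta_G(B')\le\zeta_G(B)$ for $B'\subseteq B$, rather than the literal fixed-$B$ statement of Theorem~\ref{thm:induced-mon}; the detour through $C_{B'\cup\{u\}}=(1+x)C_{B'}$ is unnecessary.
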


\begin{proof}
	It suffices to consider $H=G-uv$.
	Using Lemma~\ref{lem:edge-del} and substituting
	$x=\zeta_G(B)$,
	one obtains
	\[
	C_B(G-uv;\zeta_G(B))
	=
	-\zeta_G(B)^2
	\,C_{B\cap N(u)\cap N(v)}(G[N(u)\cap N(v)];
	\zeta_G(B)).
	\]
	
	By induced monotonicity,
	the second factor is nonnegative.
	Since $-\zeta_G(B)^2<0$,
	the right-hand side is nonpositive.
	As $C_B(G-uv;0)=1>0$,
	there must be a root in $[\zeta_G(B),0)$.
	Hence $\zeta_{G-uv}(B)\ge\zeta_G(B)$.
\end{proof}

\subsection{Root Interval}

Combining the previous results gives a sharp universal interval.

\begin{corollary}\label{cor:interval}
	For every nonempty $B\subseteq V(G)$,
	\[
	-1\le \zeta_G(B)<0.
	\]
\end{corollary}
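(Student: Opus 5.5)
The plan is to obtain the two inequalities $\zeta_G(B)\ge -1$ and $\zeta_G(B)<0$ separately, relying on Theorem~\ref{thm:existence} and Theorem~\ref{thm:induced-mon}.

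\textbf{Upper bound.} The strict inequality $\zeta_G(B)<0$ needs only the definition. We have $C_B(G;0)=c_0(B)=1\neq 0$, and every real root is negative because all coefficients are nonnegative and the constant term is positive. Hence every element of $Z_B(G)$ is strictly negative. Since $Z_B(G)$ is a finite set, its maximum is also strictly negative.

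\textbf{Lower bound, step 1.} First we need $Z_B(G)\neq\emptyset$, so that $\zeta_G(B)$ is an honest root rather than $-\infty$. Because $B\neq\emptyset$, I would exhibit a sign change directly instead of quoting Theorem~\ref{thm:existence}, whose proof itself relies on monotonicity. Pick $u\in B$ and set $H=G[\{u\}]$ with $B_0=\{u\}$. Then $C_{B_0}(H;x)=1+x$, so $\zeta_H(B_0)=-1$.

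\textbf{Lower bound, step 2.} To compare with $B$ rather than $B_0$, I use that $C_B(G;x)=C(G[B];x)$. This lets me apply Theorem~\ref{thm:induced-mon} in its unrestricted form, taking the ambient restriction set to be the whole vertex set, to the chain of induced subgraphs $G[\{u\}]\subseteq G[B]$. That gives $-1\le\zeta_{G[B]}(B)=\zeta_G(B)$. The underlying mechanism is the one in the proof of Theorem~\ref{thm:induced-mon}. Adding vertices one at a time via Lemma~\ref{lem:vertex-del}, the polynomial evaluated at the previous largest root $\zeta$ equals $\zeta$ times a clique polynomial of a neighbourhood. That factor is nonnegative at $\zeta$ by induction, so the new polynomial is $\le 0$ at $\zeta$. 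The Intermediate Value Theorem, using the value $1$ at $0$, then produces a root in $[\zeta,0)$.

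\textbf{Main obstacle.} The delicate point is the sign claim in that induction: $C_{B\cap N(v)}(G[N(v)];\zeta)\ge 0$. This holds because the neighbourhood polynomial has no roots in $(\zeta',0)$, where $\zeta'$ is its own largest root and $\zeta'\le\zeta$ by induction. Since its value at $0$ is $1$, it is positive on $(\zeta',0]$. If I reprove anything, I would state this induction hypothesis explicitly, including the case of an empty neighbourhood, where the polynomial is $1>0$. Finally, combining the two bounds gives $-1\le\zeta_G(B)<0$.
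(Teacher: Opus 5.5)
Your proof is correct and follows the same route as the paper. The strict upper bound comes from $C_B(G;0)=1$ together with nonnegative coefficients, and the lower bound comes from a single vertex $u\in B$ with polynomial $1+x$ plus induced-subgraph monotonicity, exactly as in the proof of Theorem~\ref{thm:existence}. Your extra care only makes explicit what that argument needs: you read the monotonicity as $\zeta(G[\{u\}])\le\zeta(G[B])$ via $C_B(G;x)=C(G[B];x)$, and you state the inductive hypothesis as a comparison with the root $\zeta$ of the smaller graph, whereas the paper's proof of Theorem~\ref{thm:induced-mon} compares with $\zeta_G(B)$.
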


\begin{proof}
	The lower bound follows from Theorem~\ref{thm:existence}.
	The strict upper bound follows from $C_B(G;0)=1>0$.
\end{proof}

\begin{remark}
	This corollary guarantees that every $B$-restricted clique polynomial has at least one real root. 
	Note that this property is not generally true for the face polynomial of a simplicial complex, 
	where real roots may not exist. Thus, clique polynomials enjoy a robust real-rootedness property 
	that is absent in more general combinatorial generating polynomials.
\end{remark}

\subsection{Clique Polynomials and Graph Operations}

In this subsection we record two basic structural properties of clique polynomials 
under standard graph operations. These identities will be used repeatedly in the sequel.

\begin{definition}
	The \emph{disjoint union} of two graphs $G$ and $H$, denoted by $G \cup H$, 
	is the graph with vertex set $V(G)\cup V(H)$ and edge set $E(G)\cup E(H)$, 
	where $V(G)$ and $V(H)$ are assumed disjoint.
\end{definition}

\begin{proposition}
	For any two graphs $G$ and $H$, we have
	\[
	C(G \cup H,x)=C(G,x)+C(H,x)-1.
	\]
\end{proposition}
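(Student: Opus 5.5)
The plan is to prove the identity by comparing coefficients, using the fact that the clique polynomial is a generating function for cliques graded by size. Write $C(G,x)=\sum_{i\ge0}c_i(G)x^i$, where $c_i(G)$ counts $i$-vertex cliques and $c_0(G)=1$ accounts for the empty clique. It then suffices to show
\[
c_i(G\cup H)=c_i(G)+c_i(H)\quad (i\ge1),\qquad c_0(G\cup H)=1 .
\]
Comparing constant terms, $1=1+1-1$, is exactly where the correction term $-1$ comes from.

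The key combinatorial step is the following: every nonempty clique $K$ of $G\cup H$ lies entirely in $V(G)$ or entirely in $V(H)$. Indeed, if $K$ contained $u\in V(G)$ and $w\in V(H)$, then $uw$ would have to be an edge of $G\cup H$. But $E(G\cup H)=E(G)\cup E(H)$, and no edge of $G$ or $H$ joins the two disjoint vertex sets, a contradiction.

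Next, since $G$ is an induced subgraph of $G\cup H$ (no edges are added among the vertices of $G$), a subset $K\subseteq V(G)$ is a clique in $G\cup H$ if and only if it is a clique in $G$. The same holds for $H$. Hence the nonempty cliques of $G\cup H$ are in bijection with the disjoint union of the nonempty cliques of $G$ and those of $H$, and this bijection preserves size. This gives the coefficient identity for $i\ge1$.

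Summing over $i$ then yields $C(G\cup H,x)=1+(C(G,x)-1)+(C(H,x)-1)$, which is the claimed formula. The same argument applies verbatim to $C_B$ with $B=B_G\cup B_H$, giving $C_B(G\cup H;x)=C_{B_G}(G;x)+C_{B_H}(H;x)-1$, should that be needed later.

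There is no real obstacle here. The only point requiring care is the empty clique: the cliques of $G$ and of $H$ are disjoint families except that both contain $\emptyset$, which must be counted once rather than twice. Making this explicit is what justifies the $-1$.
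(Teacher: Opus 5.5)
Your proof is correct and takes essentially the same approach as the paper: the cliques of $G\cup H$ split into cliques of $G$ and cliques of $H$, and the empty clique is counted twice, which accounts for the $-1$. You also supply details the paper leaves implicit, namely why no clique can contain vertices from both $V(G)$ and $V(H)$, and why a subset of $V(G)$ is a clique in $G\cup H$ exactly when it is a clique in $G$.
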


\begin{proof}
	Every clique in $G \cup H$ is either a clique of $G$ or a clique of $H$. 
	The empty clique is counted twice in $C(G,x)+C(H,x)$; hence we subtract $1$.
\end{proof}

\begin{definition}
	The \emph{join} of two graphs $G$ and $H$, written $G \vee H$, 
	is obtained from $G \cup H$ by adding all edges between 
	$V(G)$ and $V(H)$.
\end{definition}

\begin{proposition}
	For any graphs $G$ and $H$, we have
	\[
	C(G \vee H,x)=C(G,x)\,C(H,x).
	\]
\end{proposition}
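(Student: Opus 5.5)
The identity will follow from a bijection between the cliques of the join and pairs of cliques, one from each factor. Write $V(G\vee H)=V(G)\sqcup V(H)$. Every clique $K$ of $G\vee H$ (including the empty clique) determines the pair $(K\cap V(G),\,K\cap V(H))$. I claim this is a bijection onto the set of pairs $(K_1,K_2)$ with $K_1$ a clique of $G$ and $K_2$ a clique of $H$. The inverse map is $(K_1,K_2)\mapsto K_1\cup K_2$.

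For the forward direction, any subset of a clique is a clique. The edges of $G\vee H$ inside $V(G)$ are exactly $E(G)$, so $K\cap V(G)$ is a clique of $G$. By the same reasoning, $K\cap V(H)$ is a clique of $H$. For the inverse, take cliques $K_1\subseteq V(G)$ and $K_2\subseteq V(H)$. Pairs of vertices inside $K_1$ are adjacent in $G$, and pairs inside $K_2$ are adjacent in $H$. Pairs with one vertex in each are adjacent because the join adds every edge between $V(G)$ and $V(H)$. Hence $K_1\cup K_2$ is a clique of $G\vee H$. Since $V(G)$ and $V(H)$ are disjoint, the two maps are mutually inverse.

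Under this bijection $|K|=|K_1|+|K_2|$. Sorting the cliques of $G\vee H$ by size therefore gives the coefficient identity
\[
c_k(G\vee H)=\sum_{i+j=k} c_i(G)\,c_j(H),
\]
which is exactly the coefficient of $x^k$ in $C(G,x)\,C(H,x)$. Equivalently, one can sum $x^{|K|}=x^{|K_1|}x^{|K_2|}$ over all cliques and factor the double sum.

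There is no real obstacle here. The only point needing care is the convention $c_0=1$ for the empty clique, so that the pairs $(K_1,\emptyset)$ and $(\emptyset,K_2)$ are counted. This is what makes the constant terms multiply correctly, whereas in the disjoint-union formula the empty clique had to be subtracted once. A $B$-restricted version, $C_{B_1\cup B_2}(G\vee H;x)=C_{B_1}(G;x)\,C_{B_2}(H;x)$ with $B_1\subseteq V(G)$ and $B_2\subseteq V(H)$, follows by the same bijection.
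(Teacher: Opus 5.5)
Your proposal is correct and takes the same approach as the paper, whose proof is the one-line observation that every clique of $G\vee H$ is uniquely the union of a clique of $G$ and a clique of $H$, so the generating functions multiply. Your bijection $K\mapsto(K\cap V(G),K\cap V(H))$, together with $|K|=|K_1|+|K_2|$, is exactly that argument written out in full, and your remarks on the empty-clique convention and on the $B$-restricted version $C_{B_1\cup B_2}(G\vee H;x)=C_{B_1}(G;x)\,C_{B_2}(H;x)$ are also correct.
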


\begin{proof}
	Every clique in $G \vee H$ is obtained uniquely as the join 
	of a clique of $G$ and a clique of $H$. 
	Hence the generating functions multiply.
\end{proof}



\section{Connection to Tanner's Isoperimetric Inequality and Spectral Bounds}

\label{sec:spectral-tanner}

In this section, we connect the $B$-restricted clique polynomial to spectral graph theory.  
In particular, we show how combinatorial bounds on $\zeta_G(B)$ and spectral constraints on subsets $B$ interact to provide a unified framework controlling clique growth.

\subsection{Spectral Preliminaries}

Recall that an \emph{$(n,d,\lambda)$-graph} is a $d$-regular graph on $n$ vertices whose adjacency matrix $A$ has eigenvalues
\[
\lambda_1=d \ge \lambda_2 \ge \dots \ge \lambda_n \ge -d, 
\qquad \text{with } |\lambda_i|\le \lambda \text{ for } i\ge 2.
\]
Such graphs are fundamental in spectral graph theory \cite{spiesser2010spectral}. 

The \emph{Expander Mixing Lemma (EML)} states that for any subsets $X,Y \subseteq V(G)$,
\begin{equation}\label{eq:EML}
\left| e(X,Y) - \frac{d|X||Y|}{n} \right| \le \lambda \sqrt{|X||Y|}.
\end{equation}
Here, $e(X,Y)$ denotes the number of edges with one endpoint in $X$ and the other in $Y$.  
The EML ensures that for a spectral expander, the edge distribution between any two subsets behaves approximately like a random graph with edge density $d/n$.  
This inequality is critical for controlling $|N_B(v)|$ and, more generally, $|N_B(K)|$ for cliques $K \subseteq B$, which in turn bounds $c_i(B)$, the number of $i$-cliques in $B$.

\subsection{Tanner's Isoperimetric Inequality}

Tanner \cite{tanner1984explicit} provided an explicit \emph{isoperimetric inequality} for $(n,d,\lambda)$-graphs:

\begin{theorem}[Tanner's Inequality \cite{tanner1984explicit}]
	Let $G$ be an $(n,d,\lambda)$-graph, and let $S \subseteq V(G)$ be any subset of size $|S| = bn$, $0 < b \le 1$. Then
	\[
	|N(S)| \ge n \left( 1 - \frac{\lambda^2}{d^2} \cdot \frac{1-b}{b} \right),
	\]
	provided the right-hand side is positive.
\end{theorem}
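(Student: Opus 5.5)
The plan is the standard spectral argument: compare the first and second moments of the function $v\mapsto |N(v)\cap S|$, bound the second moment spectrally, and finish with Cauchy--Schwarz. Throughout, $N(S)$ is the set of vertices having at least one neighbour in $S$. Let $A$ be the adjacency matrix, $\mathbf 1$ the all-ones vector and $\mathbf 1_S$ the indicator vector of $S$. Then $(A\mathbf 1_S)_v=|N(v)\cap S|$, and this vector is supported exactly on $N(S)$.

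\textbf{Step 1 (first moment).} By $d$-regularity,
\[
\sum_{v}|N(v)\cap S| = d|S| = dbn.
\]

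\textbf{Step 2 (second moment).} Decompose $\mathbf 1_S=b\mathbf 1+y$ with $y\perp\mathbf 1$. A direct computation gives
\[
\|y\|^2=|S|-b^2n=bn(1-b).
\]
Since $A\mathbf 1=d\mathbf 1$ and $A$ is symmetric, $A$ preserves $\mathbf 1^\perp$. On that subspace every eigenvalue is at most $\lambda$ in absolute value, so $\|Ay\|\le\lambda\|y\|$. The vectors $bd\mathbf 1$ and $Ay$ are orthogonal, hence
\[
\sum_v |N(v)\cap S|^2=\|A\mathbf 1_S\|^2 = b^2d^2n+\|Ay\|^2\le b^2d^2n+\lambda^2 bn(1-b).
\]

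\textbf{Step 3 (Cauchy--Schwarz).} Because the vector $A\mathbf 1_S$ vanishes outside $N(S)$, Cauchy--Schwarz on $N(S)$ gives
\[
(dbn)^2\le |N(S)|\sum_v|N(v)\cap S|^2 .
\]
Combining this with Step 2 and dividing,
\[
|N(S)|\ \ge\ \frac{d^2b^2n^2}{b^2d^2n+\lambda^2bn(1-b)}=\frac{n}{1+\frac{\lambda^2}{d^2}\cdot\frac{1-b}{b}}.
\]
This is Tanner's original form of the bound, equivalently $\dfrac{d^2bn}{\lambda^2+(d^2-\lambda^2)b}$.

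\textbf{Step 4 (weakening).} Set $t=\frac{\lambda^2}{d^2}\cdot\frac{1-b}{b}\ge 0$. The elementary inequality $\frac1{1+t}\ge 1-t$ holds because $(1+t)(1-t)=1-t^2\le1$. Hence $|N(S)|\ge n(1-t)$, which is the stated bound. It is trivially true when $1-t\le0$, which explains the proviso in the statement.

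\textbf{Main obstacle.} The only delicate point is Step 2. One must use that $A$ maps $\mathbf 1^\perp$ into itself, so that the cross term vanishes and the eigenvalue bound $|\lambda_i|\le\lambda$ for $i\ge2$ applies to $y$. The remaining steps are routine algebra.
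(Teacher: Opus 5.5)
Your proof is correct. The paper gives no proof of its own and simply quotes the result from Tanner \cite{tanner1984explicit}; your argument (bounding $\|A\mathbf 1_S\|^2$ spectrally via $\mathbf 1_S=b\mathbf 1+y$ with $y\perp\mathbf 1$, then applying Cauchy--Schwarz over the support $N(S)$ of $A\mathbf 1_S$) is essentially Tanner's original proof, and it gives his sharper bound $n/(1+t)$, from which the stated form follows by $1/(1+t)\ge 1-t$. Your explicit choice of $N(S)$ as the set of vertices with a neighbour in $S$ matters, because the paper leaves $N(S)$ undefined and the statement fails for the outer boundary $N(S)\setminus S$ (e.g.\ at $b=1$).
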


This inequality gives a \emph{lower bound} on the neighborhood size of any subset of vertices.  
In our $B$-restricted setting, let $B \subseteq V(G)$ and $|B| = m$.  
Tanner’s bound implies that no subset $B$ can be too sparse, providing a \emph{spectral certificate} constraining clique growth inside $B$.

\subsection{$B$-Restricted Clique Coefficient Bounds via Spectral Expansion}

Using the EML \eqref{eq:EML} and Tanner's inequality, we obtain the following explicit bound on $i$-cliques inside $B$:

\begin{proposition}[Spectral Upper Bound on $B$-Restricted Cliques]\label{prop:spectral-B}
	Let $G$ be an $(n,d,\lambda)$-graph and $B\subseteq V(G)$ with $|B|=m$. Define
	\[
	\theta_B := \frac{m}{n} + \frac{\lambda}{d}.
	\]
	Then for all $i \ge 2$,
	\[
	c_i(B) \le \frac{m}{i!} \,(d\theta_B)^{i-1}.
	\]
	In particular, for $i=2$ (edges),
	\[
	c_2(B) = e(B) \le \frac{dm}{2} \left( \frac{m}{n} + \frac{\lambda}{d} \right).
	\]
\end{proposition}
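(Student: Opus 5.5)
The plan is to argue by induction on $i$, using the Expander Mixing Lemma~\eqref{eq:EML} only once, to bound the number of edges inside an arbitrary vertex set. Tanner's inequality should not be needed. The key observation is that $\theta_{B'}\le\theta_B$ whenever $B'\subseteq B$, because $\theta$ depends on the set only through its size $m$. The inductive hypothesis can therefore be applied to sub-neighbourhoods inside $B$ without any loss in the base $d\theta_B$.

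\emph{Base case.} For $i=2$, apply \eqref{eq:EML} with $X=Y=B$. Every edge inside $B$ is counted twice in $e(B,B)$, so
\[
2e(B)=e(B,B)\le \frac{dm^2}{n}+\lambda m=dm\,\theta_B .
\]
This gives $c_2(B)\le \tfrac{m}{2}\,d\theta_B$, which is the stated edge bound. Since this holds for every subset, write it as the degree-sum inequality
\[
\sum_{v\in B}|N_B(v)| \le m\,d\theta_B .
\]
For convenience the induction may also start at $i=1$, where $c_1(B)=m$ matches the formula exactly.

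\emph{Inductive step.} Suppose the bound holds for $i-1\ge 2$ and for all subsets of $V(G)$. Each $i$-clique $K\subseteq B$ is counted exactly $i$ times by choosing a vertex $v\in K$ and the $(i-1)$-clique $K\setminus\{v\}\subseteq N_B(v)$. This gives the identity
\[
i\,c_i(B)=\sum_{v\in B}c_{i-1}(N_B(v)).
\]
Here $c_{i-1}(N_B(v))$ is the $(i-1)$-clique count of the set $B_v:=N_B(v)$, viewed in the same ambient $(n,d,\lambda)$-graph $G$. This is legitimate because the $B$-restricted polynomial depends only on $G[B_v]$, and it is exactly the viewpoint of the vertex-deletion recurrence, Lemma~\ref{lem:vertex-del}. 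Set $m_v=|B_v|\le m$. The inductive hypothesis and $\theta_{B_v}\le\theta_B$ then give
\[
c_{i-1}(B_v)\le \frac{m_v}{(i-1)!}\,(d\theta_B)^{i-2}.
\]
Summing over $v\in B$ and inserting $\sum_v m_v=2e(B)\le m\,d\theta_B$ from the base case yields
\[
i\,c_i(B)\le \frac{m(d\theta_B)^{i-1}}{(i-1)!}.
\]
Dividing by $i$ gives the claim.

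\emph{Main obstacle.} The natural first attempt bounds each $|N_B(v)|$ pointwise by $d\theta_B$, and this fails. Applying the Expander Mixing Lemma with $X=\{v\}$ only yields $dm/n+\lambda\sqrt m$, and a single vertex can have all $d$ of its neighbours in $B$. The step that makes the proof work is therefore to average rather than take a maximum. The inductive bound must be linear in the size $m_v$ of the sub-neighbourhood, so that the degree sum $\sum_v m_v=2e(B)$ can absorb it. That linearity is exactly the form of the statement, which is why the induction closes.
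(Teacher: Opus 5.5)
Your proof is correct, and it departs from the paper's at exactly the step you call the main obstacle.

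The paper takes the pointwise route you reject. It applies the Expander Mixing Lemma with $X=\{v\}$, $Y=B$ to claim $|N_B(v)|\le dm/n+\lambda=d\theta_B$ for every $v\in B$. It then extends cliques greedily, using $N_B(K)\subseteq N_B(v)$ to get $c_{i+1}(B)=\frac{1}{i+1}\sum_{K}|N_B(K)|\le\frac{d\theta_B}{i+1}\,c_i(B)$. As you observe, with $|X|=1$ the lemma only gives the error term $\lambda\sqrt{m}$, and the pointwise claim is false in general. For $B=\{v\}\cup N(v)$ one has $|N_B(v)|=d$, which exceeds $d\theta_B=d(d+1)/n+\lambda$ in any $(n,d,\lambda)$-graph with $\lambda<d-d(d+1)/n$.

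Your argument uses the lemma only once, globally, with $X=Y=B$, where $e(B,B)=2e(B)$ (the convention under which the lemma is valid). So you only need the averaged bound $\sum_{v\in B}|N_B(v)|\le m\,d\theta_B$. The recursion $i\,c_i(B)=\sum_{v\in B}c_{i-1}(N_B(v))$ then closes for two reasons: the target bound is linear in the set size, and $\theta$ is monotone in the size. Passing to $N_B(v)$ therefore costs nothing in the base $d\theta_B$.

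The paper's greedy extension would be a shorter argument, and it is valid verbatim under the extra hypothesis $\Delta(G[B])\le d\theta_B$. Your induction over sub-neighbourhoods is what actually proves the proposition as stated, from the Expander Mixing Lemma alone. Neither argument uses Tanner's inequality.
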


\begin{proof}
	For each $v \in B$, the EML \eqref{eq:EML} with $X=\{v\}$ and $Y=B$ implies
	\[
	|N_B(v)| \le \frac{d m}{n} + \lambda = d \theta_B.
	\]
	
	\textbf{Base case ($i=2$).} Summing over all $v \in B$, we obtain
	\[
	c_2(B) = e(B) = \frac12 \sum_{v \in B} |N_B(v)| \le \frac{m}{2} \, d \theta_B.
	\]
	
	\textbf{Induction step.} Suppose for some $i \ge 2$ that
	\[
	c_i(B) \le \frac{m}{i!} (d\theta_B)^{i-1}.
	\]
	Each $(i+1)$-clique arises by extending an $i$-clique $K \subseteq B$ with a vertex in
	\[
	N_B(K) = \bigcap_{v\in K} N_B(v) \subseteq N_B(v), \quad \forall v\in K.
	\]
	Thus $|N_B(K)| \le d \theta_B$, and
	\[
	c_{i+1}(B) = \frac{1}{i+1} \sum_{K\in \mathcal{C}_i(B)} |N_B(K)| \le \frac{d\theta_B}{i+1} \, c_i(B) \le \frac{m}{(i+1)!} (d\theta_B)^i.
	\]
\end{proof}

\begin{remark}
	\begin{itemize}
		\item The parameter $\theta_B$ represents the \emph{effective density} of $B$ forced by spectral pseudorandomness.
		\item This result shows that clique growth inside $B$ behaves like an exponential process with base $d \theta_B$.
		\item The bound is nontrivial when $B$ is not too large relative to $n$ and the spectral gap $d-\lambda$ is significant.
	\end{itemize}
\end{remark}

\subsection{Bridging Combinatorial and Spectral Perspectives}

The $B$-restricted clique polynomial’s largest negative root $\zeta_G(B)$ and the spectral parameter $\theta_B$ provide \emph{complementary constraints}:

\begin{itemize}
	\item $\zeta_G(B)$ encodes combinatorial limitations on clique sizes and $B$-independence numbers.
	\item $\theta_B$ provides a spectral certificate that $B$ cannot be too sparse: each vertex has degree at most $d \theta_B$, limiting the growth of higher-order cliques.
	\item Together, these tools ensure that subsets with large $\zeta_G(B)$ cannot simultaneously have very small spectral density $\theta_B$, providing a robust, dual viewpoint for controlling $B$-restricted clique growth.
\end{itemize}

Hence, Tanner’s inequality and the Expander Mixing Lemma form the spectral backbone that reinforces the combinatorial bounds obtained via the $B$-restricted clique polynomial.


\section{Weighted $B$-Clique Polynomials and Graph Homomorphisms}

In this section, we present an important application of the \emph{$B$-restricted weighted clique polynomial} 
to \emph{graph homomorphisms}. We show that the largest negative root $\zeta_G(B)$ serves as a \emph{structural invariant}, giving a \emph{no-homomorphism criterion} in the $B$-restricted setting \cite{fisher1990dependence,hajiabolhassan1998clique,hoede1994clique}.

\subsection{Homomorphisms and $B$-Partitions}

Let $G$ and $H$ be simple graphs. Recall:

\begin{definition}
	A \emph{graph homomorphism} $f: G \to H$ is a map $f: V(G) \to V(H)$ such that
	\[
	uv \in E(G) \implies f(u)f(v) \in E(H).
	\]
	It is \emph{surjective} if $f$ maps onto all of $V(H)$.
\end{definition}

For a surjective homomorphism $f$, define the preimage sets
\[
A_x := f^{-1}(x) \subseteq V(G), \quad x \in V(H),
\]
so that $\{A_x\}_{x \in V(H)}$ partitions $V(G)$ into independent sets.  

In the $B$-restricted context, let $B_G \subseteq V(G)$ and $B_H \subseteq V(H)$ be the respective vertex subsets.  
Then the restriction $f(B_G) \subseteq B_H$ induces a \emph{$B$-partition} of $B_G$ into independent sets inside $B_G$, which will be key for analyzing $\zeta_G(B_G)$ in terms of $\zeta_H(B_H)$.

\subsection{$B$-Weighted Blow-Up Graphs}

Let $G$ have $B$-subset $B_G = \{v_1,\dots,v_m\}$ with integer weights
\[
w_i := |A_i \cap B_G|,
\]
where $A_i$ corresponds to the preimage of $v_i \in B_H$.  

\begin{definition}[$B$-Weighted Blow-Up Graph]
	The \emph{$B$-weighted blow-up graph} $G_B^{\boldsymbol{w}}$ is obtained by replacing each $v_i \in B_G$ 
	with a cluster of $w_i$ vertices and replacing each edge inside $B_G$ by the complete bipartite graph between the corresponding clusters \cite{hajiabolhassan1998clique}.  
\end{definition}

By combinatorial expansion, this satisfies
\begin{equation}\label{eq:B-blowup-polynomial}
C_B(G,x) = C_B(G_B^{\boldsymbol{w}}, x; \boldsymbol{w}),
\end{equation}
where $C_B(G,x)$ is the $B$-restricted clique polynomial with uniform weight 1 \cite{fisher1990dependence,hoede1994clique}.

\subsection{No-Homomorphism Criterion via $B$-Roots}

\begin{theorem}[$B$-Homomorphism Root Monotonicity]
	Let $G$ and $H$ be simple graphs with $B$-subsets $B_G \subseteq V(G)$, $B_H \subseteq V(H)$, 
	and let $f: G \to H$ be a surjective homomorphism such that $f(B_G) = B_H$.  
	Then
	\[
	\zeta_G(B_G) \ge \zeta_H(B_H),
	\]
	where $\zeta_G(B_G)$ and $\zeta_H(B_H)$ are the largest negative roots of the $B$-restricted clique polynomials \cite{hajiabolhassan1998clique,hoede1994clique}.
\end{theorem}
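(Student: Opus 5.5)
The plan is to sandwich $G[B_G]$ between $H[B_H]$ and a blow-up of $H[B_H]$, and then apply the two monotonicity theorems already established: Theorem~\ref{thm:induced-mon} and Theorem~\ref{thm:spanning-mon}. First I will reduce to induced graphs. By Lemma~\ref{lem:vertex-del}(ii), vertices outside $B$ do not affect $C_B$, so $C_{B_G}(G;x)=C(G[B_G];x)$ and $C_{B_H}(H;x)=C(H[B_H];x)$. Hence $\zeta_G(B_G)$ and $\zeta_H(B_H)$ are the largest negative roots of the ordinary clique polynomials of $G':=G[B_G]$ and $H':=H[B_H]$. The restriction $f|_{B_G}\colon G'\to H'$ is a surjective homomorphism, because $f(B_G)=B_H$ and edges of $G'$ are edges of $G$.

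Next, for each $x\in B_H$ I set $w_x:=|A_x\cap B_G|$. Surjectivity gives $w_x\ge 1$. Let $\widetilde H$ be the blow-up of $H'$ with these weights: each $x$ is replaced by an independent cluster of $w_x$ vertices, and two clusters are joined completely exactly when $xy\in E(H')$. I identify the cluster of $x$ with $A_x\cap B_G$, so that $V(\widetilde H)=B_G=V(G')$. The key observation is that $G'$ is a spanning subgraph of $\widetilde H$. Indeed, if $uv\in E(G')$, then $f(u)f(v)\in E(H')$; in particular $f(u)\neq f(v)$, since $H$ is simple and has no loops. So $u$ and $v$ lie in distinct clusters that are completely joined in $\widetilde H$, and therefore $uv\in E(\widetilde H)$. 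By Theorem~\ref{thm:spanning-mon} (with $B$ the whole vertex set) we get $\zeta(\widetilde H)\le \zeta(G')=\zeta_G(B_G)$.

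Finally, I pick one vertex from each cluster, which is possible since $w_x\ge1$. These vertices induce in $\widetilde H$ a copy of $H'$, because adjacency between distinct clusters is exactly the adjacency in $H'$. So $H'$ is an induced subgraph of $\widetilde H$, and Theorem~\ref{thm:induced-mon} gives $\zeta_H(B_H)=\zeta(H')\le\zeta(\widetilde H)$. Chaining the two inequalities yields $\zeta_H(B_H)\le\zeta_G(B_G)$.

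The step requiring the most care is the bookkeeping that transfers the monotonicity theorems, stated for $C_B$ on a fixed ambient graph, to the graphs $G'$, $\widetilde H$ and $H'$. This is harmless, since we may take $B$ to be the full vertex set in each case. The only genuine content is the check that $G'\subseteq\widetilde H$ as a spanning subgraph. That check uses only that the fibres $A_x$ are independent and that $f$ preserves adjacency. Note that the weighted identity \eqref{eq:B-blowup-polynomial} is not needed for this argument.
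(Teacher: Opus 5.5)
Your proof is correct, and it takes a different, more careful route than the paper.

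\textbf{What the paper does.} It forms a blow-up with cluster sizes $|f^{-1}(x)\cap B_G|$, notes that $H_B^{\boldsymbol 1}$ is an induced subgraph of it, and applies Theorem~\ref{thm:induced-mon}. It then uses the weighted identity \eqref{eq:B-blowup-polynomial} to assert $\zeta_{G_B^{\boldsymbol w}}=\zeta_G(B_G)$. Edge deletion never enters.

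\textbf{What you do instead.} You observe that $G[B_G]$ is only a \emph{spanning subgraph} of the blow-up $\widetilde H$ of $H[B_H]$. Theorem~\ref{thm:spanning-mon} then gives the inequality $\zeta(\widetilde H)\le\zeta_G(B_G)$, so the weighted polynomial is not needed.

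\textbf{Why this matters.} For the blow-up in which $H[B_H]$ sits as an induced subgraph, the paper's closing equality is false in general. Take $f\colon 2K_2\to K_2$ mapping both edges onto the single edge. Then $\widetilde H=C_4$ and $C(C_4;x)=(1+2x)^2$, so $\zeta(\widetilde H)=-\tfrac12$. But $C(2K_2;x)=1+4x+2x^2$ gives $\zeta_G(B_G)=-1+\tfrac{\sqrt2}{2}\approx-0.29$. Your spanning-subgraph step supplies exactly the inequality the paper's argument silently needs.

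\textbf{One bookkeeping caveat.} Theorem~\ref{thm:induced-mon} as literally stated requires $B\subseteq V(H)$ for the \emph{smaller} graph. In that form it is trivial: both sides are the largest negative root of $C(G[B];x)$, since $H[B]=G[B]$. So you cannot take $B=V(\widetilde H)$ there, contrary to your remark that this is harmless.

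What you actually use is the statement $\zeta(H')\le\zeta(\widetilde H)$ for ordinary clique polynomials of an induced subgraph. This is true, by the standard induction via Lemma~\ref{lem:vertex-del}: evaluate at $\zeta(G-v)$ and use that $G[N(v)]$ is an induced subgraph of $G-v$. It is also how the paper itself uses the theorem, both here and in Theorem~\ref{thm:existence}. It would be better to state explicitly that you are invoking this version.
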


\begin{proof}
	Let $A_x := f^{-1}(x) \cap B_G$ for each $x \in B_H$.  
	The sets $\{A_x\}$ partition $B_G$ into independent sets.  
	
	Construct the $B$-weighted blow-up graph $G_B^{\boldsymbol{w}}$ with cluster sizes $w_x := |A_x|$.  
	By surjectivity, the $B$-weighted blow-up $H_B^{\boldsymbol{1}}$ (all weights 1) is an induced subgraph of $G_B^{\boldsymbol{w}}$.  
	
	Using Theorem~\ref{thm:induced-mon} (monotonicity of $\zeta$ under induced subgraphs), we have
	\[
	\zeta_{H_B^{\boldsymbol{1}}} \le \zeta_{G_B^{\boldsymbol{w}}}.
	\]
	
	Finally, applying identity \eqref{eq:B-blowup-polynomial}, we obtain
	\[
	\zeta_H(B_H) = \zeta_{H_B^{\boldsymbol{1}}} \le \zeta_{G_B^{\boldsymbol{w}}} = \zeta_G(B_G),
	\]
	as claimed.
\end{proof}

\begin{corollary}[$B$-No-Homomorphism Criterion]
	If 
	\[
	\zeta_G(B_G) < \zeta_H(B_H),
	\]
	then no surjective homomorphism $f: G \to H$ can map $B_G$ onto $B_H$ \cite{hajiabolhassan1998clique}.
\end{corollary}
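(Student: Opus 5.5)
The plan is to derive the corollary as the contrapositive of the preceding $B$-Homomorphism Root Monotonicity theorem; no new combinatorial input is needed. I will argue by contradiction. Suppose $\zeta_G(B_G) < \zeta_H(B_H)$, and suppose nonetheless that there exists a surjective homomorphism $f\colon G\to H$ with $f(B_G)=B_H$.

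The first step is to check that $f$ satisfies the hypotheses of the theorem exactly. It is a homomorphism, it is surjective onto $V(H)$, and it maps $B_G$ onto $B_H$. These are verbatim the assumptions of the theorem, so it applies to the pair $(G,B_G)$, $(H,B_H)$ and yields
\[
\zeta_G(B_G)\ \ge\ \zeta_H(B_H).
\]
This contradicts the assumed strict inequality $\zeta_G(B_G)<\zeta_H(B_H)$. Hence no such $f$ exists.

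The only point needing care is that both quantities are well-defined real numbers, or handled consistently by the convention $\zeta=-\infty$, so that the comparison makes sense.
\begin{itemize}
\item If $B_H\neq\emptyset$, then $B_G\neq\emptyset$ as well, because $f(B_G)=B_H$. Corollary~\ref{cor:interval} then places both roots in $[-1,0)$, and the inequalities are ordinary real inequalities.
\item If $B_H=\emptyset$, then $B_G=\emptyset$ too. Both polynomials equal the constant $1$, both values are $-\infty$, and the hypothesis $\zeta_G(B_G)<\zeta_H(B_H)$ can never hold, so the statement is vacuous.
\end{itemize}

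I expect no real obstacle here. All of the substantive work is contained in the theorem, specifically in its use of the blow-up identity \eqref{eq:B-blowup-polynomial} together with Theorem~\ref{thm:induced-mon}. The corollary inherits whatever validity that argument has.
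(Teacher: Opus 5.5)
Your argument is correct and matches the paper: the paper gives no separate proof of the corollary and treats it as the contrapositive of the $B$-Homomorphism Root Monotonicity theorem, and your empty-$B$ check is a harmless extra. As for the validity you say the corollary inherits, the theorem is in fact true: $f$ restricts to a vertex-surjective homomorphism $G[B_G]\to H[B_H]$, so $G[B_G]$ is a spanning subgraph of a blow-up of $H[B_H]$ that contains $H[B_H]$ as an induced subgraph, and Theorems~\ref{thm:spanning-mon} and~\ref{thm:induced-mon} then give $\zeta_H(B_H)\le\zeta_G(B_G)$ (this is a cleaner route than the paper's blow-up-of-$G$ argument).
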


\begin{remark}
	\begin{itemize}
		\item The $B$-restricted root $\zeta_G(B)$ acts as a \emph{quantitative invariant} for $B$ \cite{hoede1994clique}.  
		\item Even if the full graph $G$ admits a homomorphism to $H$, the $B$-root may locally obstruct it within $B$, providing a finer combinatorial obstruction.  
		\item This framework integrates with spectral bounds \cite{tanner1984explicit,spiesser2010spectral}: subsets $B$ with large $\zeta_G(B)$ cannot simultaneously have too sparse spectral density, providing dual combinatorial-spectral control of clique growth.
	\end{itemize}
\end{remark}

\subsection{Integration with Spectral Perspective}

Together with Section~\ref{sec:spectral-tanner}, the $B$-weighted clique polynomial shows that:
\begin{enumerate}
	\item Combinatorial constraints ($\zeta_G(B)$) restrict local clique expansions and homomorphisms inside $B$ \cite{fisher1990dependence,hajiabolhassan1998clique}.  
	\item Spectral constraints (e.g., $\lambda$ via Tanner/EML) prevent $B$ from being too sparse, bounding edges and $i$-cliques \cite{tanner1984explicit,spiesser2010spectral}.
	\item Combining these tools gives a \emph{robust, dual viewpoint} on $B$-restricted structure, blending combinatorial invariants with spectral certificates.
\end{enumerate}

\section{Conclusion and Open Problems}

In this paper, we introduced and systematically studied the \emph{$B$-restricted clique polynomial} 
and its largest negative root $\zeta_G(B)$.  
We demonstrated that this $B$-version not only generalizes classical clique polynomials but also provides a 
powerful tool to capture structural properties of vertex subsets in graphs.

We derived several fundamental bounds and structural results linking $\zeta_G(B)$ to classical graph invariants:

\begin{itemize}
	\item The \emph{$B$-independence number} and chromatic number, providing subset-specific generalizations of well-known extremal bounds.
	\item Partial results for \emph{$B$-girth} and Hamiltonicity, illustrating how $\zeta_G(B)$ constrains the presence of cycles and spanning structures within $B$.
	\item Connections to \emph{spectral graph theory}, via Tanner’s inequality, showing that $B$-restricted clique growth in regular expanders is naturally controlled by spectral gaps.
	\item Applications to \emph{$B$-restricted homomorphisms}, including a \emph{no-homomorphism criterion}, and a demonstration that $\zeta_G(B)$ acts as a refined structural invariant.  
	Here, the $B$-weighted blow-up construction highlights how local multiplicities within $B$ influence global homomorphism constraints.
\end{itemize}

Taken together, these results reveal that the $B$-clique polynomial is a unifying concept that simultaneously captures combinatorial, spectral, and homomorphic constraints on a vertex subset.  
It provides a natural framework for analyzing both global and local structural properties of graphs, with precise control over the behavior of subsets via $\zeta_G(B)$.

\subsection*{Open Problems and Future Directions}

The study of $B$-restricted clique polynomials opens several promising research directions:

\begin{enumerate}
	\item \textbf{Tighter $B$-Girth and Hamiltonicity Bounds:}  
	Can sharper integer bounds be derived for cycles restricted to $B$, potentially using advanced algebraic or spectral techniques?
	
	\item \textbf{Weighted and Fractional Extensions:}  
	Extending the $B$-polynomial to allow fractional or real weights, analogous to fractional homomorphisms, and studying the corresponding $\zeta_G(B)$ behavior.
	
	\item \textbf{Connections to Graph Limits and Expander Theory:}  
	How does $\zeta_G(B)$ behave asymptotically in sequences of expander graphs or random graphs? Can $B$-restricted polynomials provide a new tool for analyzing large-scale pseudorandom structures?
	
	\item \textbf{Algorithmic Applications:}  
	Can $\zeta_G(B)$ be used to efficiently detect local obstructions to colorings, homomorphisms, or other combinatorial properties in large networks?
	
	\item \textbf{Poset and DAG Generalizations:}  
	Extending the $B$-clique polynomial to directed acyclic graphs or posets may uncover further connections to ideals, chains, and Dedekind-Pascal numbers, following the algebraic-combinatorial perspective.
\end{enumerate}

We believe that these directions not only enrich the theory of clique polynomials but also offer practical insights for combinatorial optimization, spectral graph theory, and network analysis.  
The $B$-restricted viewpoint allows one to isolate and study subtle local phenomena that are invisible to classical global invariants, while the weighted blow-up perspective provides a concrete combinatorial interpretation of how these local structures influence global graph properties.



\begin{thebibliography}{9}
	
	\bibitem{fisher1990dependence}
	D.C. Fisher and A.E. Solow, Dependence polynomials, \emph{Discrete Mathematics}, 82 (1990), 251–258.
	
	\bibitem{hajiabolhassan1998clique}
	H. Hajiabolhassan and M.L. Mehrabadi, On clique polynomials, \emph{Australasian Journal of Combinatorics}, 18 (1998), 313–316.
	
	\bibitem{hoede1994clique}
	C. Hoede and X. Li, Clique polynomials and independent set polynomials of graphs, \emph{Discrete Mathematics}, 125 (1994), 219–228.
	
	\bibitem{tanner1984explicit}
	R.M. Tanner, Explicit concentrators from generalized $N$-gons, \emph{SIAM Journal on Algebraic Discrete Methods}, 5(3) (1984), 287–293.
	
	\bibitem{spiesser2010spectral}
	D. Spielman, Spectral graph theory, lecture notes, Yale University, 2010.
	
\end{thebibliography}
\end{document}